\DeclareMathOperator{\supp}{supp}
\DeclareMathOperator{\Mather}{Mather}
\numberwithin{equation}{section}
\numberwithin{figure}{section}
\theoremstyle{plain}
\newtheorem*{cor*}{\protect\corollaryname}
\theoremstyle{plain}
\newtheorem{thm}{\protect\theoremname}[section]
\theoremstyle{definition}
\newtheorem{defn}[thm]{\protect\definitionname}
\theoremstyle{question}
\theoremstyle{remark}
\newtheorem{rem}[thm]{\protect\remarkname}
\theoremstyle{plain}
\newtheorem{prop}[thm]{\protect\propositionname}
\theoremstyle{plain}
\newtheorem{lem}[thm]{\protect\lemmaname}
\theoremstyle{plain}
\newtheorem{cor}[thm]{\protect\corollaryname}
\newtheorem{ex}[thm]{Example}
\newcommand{\arcangle}{%
  \mathord{<\mspace{-9mu}\mathrel{)}\mspace{2mu}}%
}
\numberwithin{equation}{section}
\numberwithin{figure}{section}
 \let\footnote=\endnote
\theoremstyle{definition}
\def\a{\alpha}
\def\R{\mathbb{R}}
\def\N{\mathbb{N}}
\def\Z{\mathbb{Z}}
\keywords{}
\subjclass[2000]{}
\def\a{\alpha}
\def\R{\mathbb{R}}
\def\sl2{\text{SL}(2,\R)}
\def\A{\mathcal{A}}
\def\B{\mathcal{B}}
\def\Sig{\Sigma}
\def\glr{\text{GL}_d(\R)}
\def\gl2{\text{GL}_2(\R)}
  \providecommand{\corollaryname}{Corollary}
  \providecommand{\definitionname}{Definition}
  \providecommand{\lemmaname}{Lemma}
  \providecommand{\propositionname}{Proposition}
  \providecommand{\remarkname}{Remark}
  \providecommand{\theoremname}{Theorem}
\providecommand{\theoremname}{Theorem}
\definecolor{lime}{HTML}{A6CE39}
\DeclareRobustCommand{\orcidicon}{
	\begin{tikzpicture}
	\draw[lime, fill=lime] (0,0) 
	circle [radius=0.16] 
	node[white] {{\fontfamily{qag}\selectfont \tiny ID}};
	\draw[white, fill=white] (-0.0625,0.095) 
	circle [radius=0.007];
	\end{tikzpicture}
	\hspace{-2mm}
}
\author[Reza Mohammadpour]{Reza Mohammadpour\orcidA{}}
\address{Department of Mathematics, Uppsala University, Box 480, SE-75106, Uppsala, Sweden.}
\date{\today}
\subjclass[2010]{15B48, 37H15, 37D30, 37A35}
\keywords{Ergodic theory, Lyapunov exponents, matrix cocycles, ergodic optimization}%
\email{reza.mohammadpour@math.uu.se}
\begin{document}
\title{Entropy of Lyapunov maximizing measures of $SL(2,\R)$ typical cocycles}

\maketitle

\begin{abstract}
In this paper we study ergodic optimization problems for typical cocycles. We consider one-step $SL(2,\R)$-cocycles that satisfy pinching and twisting conditions (in the sense of \cite
{BV}). We prove that the Lyapunov maximizing measures have zero entropy under additional assumptions that the maps $e_1$ and $e_2$ are one-to-one on the Mather set.
\end{abstract}

\section{Introduction}
Our main subject is {\it ergodic optimization}. The large-scale image of ergodic optimization is that one is interested in optimizing potential functions over the (typically externally complex) class of invariant measures for a dynamical system; see \cite{Je, Bo}.The field also has both local aspects in which the optimization is studied on individual functions and has general aspects in which the optimization is considered in the large on whole Banach spaces. There has been input from physicists with numerical simulations suggesting that the optimizing measures are typically supported on periodic orbits, which is the main question of this subject. That is converted to the dynamical system language as follows, for hyperbolic base dynamics and for typical functions, optimizing orbits should have low complexity. This is proved by Contreras \cite{C}, who showed that the optimizing orbits with respect to generic H\"older/Lipschitz potentials over an expanding base are periodic.

In this paper, we would like to study ergodic optimization in a non-commutative setting where {\it matrix cocycles} are a well-known example of a noncommutative system. In the {\it ergodic optimization of Lyapunov exponents}, the quantities we want to maximize are the associated Lyapunov exponents of matrix cocycles. We would like to investigate the low complexity phenomena mentioned above in the matrix cocycle case. For 2-dime\-n\-sional one-step cocycles, Bochi and Rams \cite{BR} showed that
Lyapunov-maximizing measures have zero entropy. Their
sufficient conditions for zero entropy are domination and existence of strictly invariant
families of cones satisfying a non-overlapping condition, which is open, but not a typical assumption. Jenkinson and Pollicott \cite{JP}  also investigated the zero entropy of Lyapunov maximizing measures under the almost same assumption, but their techniques are different. Bousch and Mairesse \cite{BMa} showed  that the maximizing products are not always periodic, thus disproving the so-called \textit{Finiteness  Conjecture}. Their examples are difficult to construct, and are broadly believed to be rare. The complexity of the matter already appears in the simple setting one-step cocycles. Indeed, such sets appear in the literature both as finiteness counterexamples \cite{BTV, HMST, O}.

In this paper, we deal  with  $SL(2,\R)$ one-step  cocycles.  The smaller size of this body of research on the entropy of Lyapunov optimization measures can perhaps be explained
by the fact that the behavior of the Lyapunov exponents of matrix cocycles is much more complicated than the behavior of Birkhoff averages. Inspired by \cite{BR}, we show that the entropy of Lyapunov maximizing measures is zero for typical cocycles under some assumptions on the Mather set.

 This should be also contrasted with the situation typically found in the {\it thermodynamic formalism} of ergodic theory, in which the measures picked out by variational principles tend to have wide support. Ergodic optimization may be seen as the {\it zero-temperature} limit of thermodynamic formalism; see \cite{Bremont, CH-zerotem10, Jenkinson-MU, Morris-zero, Moh20a}.

\subsection{Ergodic optimization of Birkhoff averages} We call $(X, T)$ a \textit{topological dynamical system} (TDS) if $T : X \rightarrow X$ is
a continuous map on the compact metric space $X$. We denote by $\mathcal{M}(X, T)$ the space of all $T$-invariant Borel probability
measures on $X$, which the space is a non-empty convex set and is
compact with respect to the weak* topology. Let $\mathcal{E}(X,T)$ be the subset formed by ergodic measures, which are exactly the extremal points of  $\mathcal{M}(X, T)$.

Let $f:X\rightarrow \R$ be a continuous function. We denote by $S_{n}f(x):=\sum_{k=0}^{n-1}f(T^{k}(x))$ the \textit{Birkhoff sum}, and we call $\lim_{n\rightarrow \infty} \frac{1}{n}S_{n}f(x),$ the {\it Birkhoff average} and $\mu$-almost every $x\in X$,  it is well-defined. We denote by $\beta(f)$ and $\alpha(f)$ the supremum and infimum of the Birkhoff average over $x\in X$, respectively; we call these numbers the \textit{maximal and minimal ergodic averages of $f$}. Since $\alpha(f)=-\beta(-f)$, let us focus the discussion on the quantity $\beta$ that can also be characterized as
\[
\beta(f)=\sup _{\mu \in \mathcal{M}(X, T)} \int f d \mu.
\]
By compactness of $\mathcal{M}(X,T)$, the supremum is attained; such measures will be called \textit{Birkhoff maximizing measures},  we denote them by $\mathcal{M}_{max}(f).$

Given a continuous function $f: X \rightarrow \mathbb{R}$ with some prescribed regularity, under suitable dynamical hypotheses there exists a continuous function $g: X \rightarrow \mathbb{R}$ with the property that $f \leq g \circ T-g+\beta(f)$. This relation is equivalent to the statement that there exists a continuous $g$ such that $\sup (f+g-g \circ T)=\beta(f)$. Results of this type are sometimes called \textit{Mañé Lemma}; see
\cite{Bousch, Bousch1, CLT, Savchenko} for various versions and approaches, and see \cite[Proposition 2.1]{Bo} for a
negative result.

In the ergodic optimization of Birkhoff averages, a {\it maximizing set} is a closed subset such that an invariant probability measure is a maximizing measure if and only if its support lies on this subset. If Mañé Lemma holds, then the existence of such sets is guaranteed in any setting. The smallest maximizing set is the {\it Mather set} that is defined as the union of the supports of all maximizing measures, which is borrowed from Lagrangian dynamics; see more information \cite{CLT, Je}.

\subsection{Ergodic optimization of Lyapunov exponents}
We assume that $X$ is a compact metric space and $T:X\to X$ is a homeomorphism.  Let $\A:X \rightarrow GL(d, \R)$ be  a continuous map. We define a \textit{matrix cocycle} $F:X \times \R^{d} \rightarrow X\times \R^{d}$  as
\[ F(x, v)=(T(x), \A(x)v).\]
We say that $F$ is generated by $T$ and $\A$; we will also denote it by $(\A, T).$ Observe $F^{n}(x, v)=(T^{n}(x), \A^{n}(x)v)$ for each $n \geq 1$, where
\begin{equation}\label{product}
\A^{n}(x)=\A(T^{n-1}(x))\A(T^{n-2}(x))\cdots \A(x).
\end{equation}

By Kingman’s subadditive ergodic theorem, for any $\mu \in \mathcal{M}(X, T)$, the \textit{top Lyapunov exponent} exists:
\begin{equation}\label{LE}
\chi(x, \A):= \lim_{n\rightarrow \infty}\frac{1}{n}\log \|\A^{n}(x)\|,
\end{equation}
 for $\mu$-almost every $x \in X.$
 Let us denote $\chi(\mu, \A)=\int \chi(. , \A )d\mu.$ If the measure $\mu$ is ergodic then $\chi(x, \A)=\chi(\mu, \A)$ for $\mu$-almost every $x\in X.$

Similar to what we did for Birkhoff averages, we can either maximize or minimize the top Lyapunov exponent \eqref{LE}; we define the maximal and minimal Lyapunov exponent as follows
 \begin{equation}\label{max1}
\beta(\A):= \limsup_{n\rightarrow \infty}\frac{1}{n}\log \sup_{x\in X}\|\A^{n}(x)\|,
\end{equation}
\begin{equation}
\alpha(\A):= \liminf_{n\rightarrow \infty}\frac{1}{n}\log \inf_{x\in X}\|\A^{n}(x)\|.
\end{equation}
It is easy to see that the limit in \eqref{max1} exists (see \cite{Mor13}). Moreover, the maximal and minimal Lyapunov exponent can be characterized as the supremum and infimum of the Lyapunov exponents of measures over invariant measures, respectively, i.e. 
\begin{equation}\label{max2}
\beta(\A)=\sup_{\mu \in \mathcal{M}(X,T)}\chi(\mu, \A),
\end{equation}
\begin{equation}\label{min2}
\alpha(\A)=\inf_{\mu \in \mathcal{M}(X,T)}\chi(\mu, \A).
\end{equation}
In \eqref{max2}, the supremum is always attained by an ergodic measure. This follows from the fact that $\mathcal{M}(X, T)$ is a compact convex set  whose extreme points are exactly the ergodic measure, and upper semi continuity of $\chi(., \A)$ with respect to the weak* topology (see \cite{Mor13, BR}). On the other hand, the infimum in \eqref{min2} does not necessarily attend; see \cite{BMa, Moh20} for more information.

 Let us define the set of \textit{Lyapunov maximizing measures} of the cocycle $\A$ to be the set of invariant measures on $X$ given by 
\[ \mathcal{M}_{max}(\A):=\{\mu \in \mathcal{M}(X, T): \hspace{0.1cm} \beta(\A)=\chi(\mu, \A) \}. \]
The $\mathcal{M}_{max}$ is non-empty, compact and convex. Another significant strand of research in the ergodic optimization of Lyapunov exponents, again already present in early works, was its interpretation (see \cite{Moh20a}) as a limiting \textit{zero temperature} version of the subadditive thermodynamic formalism, with maximizing measures (referred to as \textit{ground states} by physicists) arising as zero temperature accumulation points of equilibrium measures of the subadditive potentials; working this area has primarily focused on understanding convergence and non-convergence in the zero temperature limit.

Following Morris \cite{Mor13}, we define the Mather set $\Mather(\Phi_{\A})$ for the subadditive potential $\Phi_{\A}:=\{\log \|\A^{n}\|\}_{n=1}^{\infty}$ as the union of the supports of all Lyapunov maximizing measures, i.e., 
\[ \Mather(\Phi_{\A})=\bigcup_{\mu \in \mathcal{M}_{max}(\A)}\supp \mu.\]
We denote $\mathcal{K} =\Mather(\Phi_{\A})$ to simplify the notations in the proofs. The Mather set is a nonempty, compact, and $T$-invariant set (see \cite{Mor13, BGar}).

\subsection{Typical cocycles}

Let $\Sigma=\{1, \ldots, k\}^{\mathbb{Z}}$ be the collection of all infinite words obtained from integers $\{1, \ldots, k\}$.  The empty word $\left.i\right|_{0}$ is denoted by $\varnothing$. We denote by $\Sigma_{n}$ the set of words with the length $n$ and $\Sigma_{*}=\bigcup_{n \in \mathbb{N}} \Sigma_{n} \cup\{\varnothing\}$. Thus $\Sigma_{*}$ is the collection of all finite words.

A well-known example of matrix cocycles is \textit{one-step cocycles} which is defined as follows. Assume that $\Sig=\{1,...,k\}^{\Z}$ is a symbolic space. Suppose that $T:\Sig \rightarrow \Sig$  is a shift map, i.e. $T(x_{l})_{l\in \Z}=(x_{l+1})_{l\in \Z}$. Given a $k$-tuple of matrices $\textbf{A}=(A_{1},\ldots,A_{k})\in \glr^{k}$, we associate with it the locally constant map $\mathcal{A}:\Sig \rightarrow GL(d, \R)$ given by $\mathcal{A}(x)=A_{x_{0}},$ that means the matrix cocycle $\mathcal{A}$ depends only on the zero-th symbol $x_0$ of $(x_{l})_{l\in \Z}$. In this case, we say that $(\mathcal{A}, T)$ is a one-step cocycle; when the context is clear, we say that $\mathcal{A}$ is a one-step cocycle. For any length $n$ word $I=i_{0} \ldots i_{n-1},$  we denote 
\[\mathcal{A}_{I}:=A_{i_{n-1}}\ldots A_{i_{0}}.\]
Therefore, when $\A$ is a one-step cocycle, \[ \A^n(x)=A_{x_{n-1}}\ldots A_{x_{0}}.\]

For  one-step  cocycles,  the  value of the maximal Lyapunov exponent can  be alternatively defined as follows
\[ \beta(\A):=\lim_{n\to \infty}\frac{1}{n}\log \sup_{i_{1},
\ldots,i_{n}}\|A_{i_{n}} \ldots A_{i_{1}}\|.\]

For $A \in \gl2$, we denote by $\sigma_1(A) \geq \sigma_2
(A)$ the singular values of $A$, that is, the eigenvalues of $\left(A^* A\right)^{1 / 2}$.

\begin{defn} We say that a one-step cocycle $\mathcal{A}:\Sigma \to SL(2, \R)$
\begin{itemize}
\item is \textit{pinching} if for   any constant $\kappa > 1$ there is $I \in \Sigma_{*}$ such that $\frac{\sigma_{1}(\A_{I})}{\sigma_{2}(\A_{I})} > \kappa$;
\item  is \textit{twisting} if given any vector lines  $G, F_1, \ldots, F_n \subset \R^2$, there exists $J \in \Sigma_{*}$ such that $\mathcal{A}_{J} \left(G\right) \notin \{F_1, \ldots, F_{n}\}.$
\end{itemize}
\end{defn}
 
 We say that the cocycle $\mathcal{A}$ is \textit{typical} if it is pinching and twisting. The above definition comes from \cite[Subsection A.4.5]{AV07}, where they explained that the above definitions of pinching and twisting are equivalent to the general definitions \cite[Definition 1.2]{AV07}. Avila and Viana \cite{AV07, AV07-acta} and Bonatti and Viana \cite{BV} showed that the set of typical cocycles is open and dense.

\subsection{Domination and Multicone}\label{multi}
Let 
$X$ be 
a compact metric space. Let $\A:X \to GL(2,\R)$ be a matrix cocycle function over a homeomorphism map $(X,T).$ We say that $2\times 2$ matrix cocycle $(\A,T)$ is \textit{dominated} or \textit{uniformly hyperbolic} with respect to $X$ if there are two continuous maps $e_{1}$ and $e_{2}$ from $X$ to $\mathbb{P}\R^2$, which are a splitting of $\R^2$ as the sum of two
one-dimensional subspaces $e_{1}(\omega), e_{2}(\omega)$ for each $\omega \in X$, and the following properties holds:
\begin{itemize}
\item equivariance:
\[\A(\omega)(e_{i}(\omega))=e_{i}(T(\omega)) \quad \text{for all } \omega \in X \text{ and }  i\in\{1,2\};\]  \item dominance: there are constants $c>0$ and $0<\delta<1$ such that
\[\frac{\left\|\A^{n}(\omega) \mid e_{1}(\omega)\right\|}{\left\|\A^{n}(\omega) \mid e_{2}(\omega)\right\|} \geqslant c e^{\delta n} \quad \text { for all } \omega \in X \text { and } n \geqslant 1.\]
\end{itemize}

By the domination properties, if $x \in X$ and $v \in \mathbb{R}^{2} \backslash e_{2}(x)$ then
\begin{equation}\label{domin}
0<\lim _{n \rightarrow \infty} \frac{\left\|\A^{n}(x) v\right\|}{\left\|\A^{n}(x) \mid e_{1}(x)\right\|}<\infty, \quad \lim _{n \rightarrow \infty} \arcangle \left(\A^{n}(x) v, e_{1}\left(T^{n} (x)\right)\right)=0,
\end{equation}
where the angle defined by a distance function on $\mathbb{P}\R^2$ (see \cite{Arn}).

Bochi and Gourmelon \cite{BG09} also proved that domination can be characterized in terms of singular values. Indeed, the cocycle $\A: X \to \mathrm{GL}(2,\mathbb{R})$ is dominated with respect to $Y$, where $Y \subset X$ is a compact and $T$-invariant set, if there are constants $C > 0$ and $0 < \tau < 1$ such that
\[
\frac{\sigma_{2}(\A^{n}(x))}{\sigma_{1}(\A^{n}(x))} < C \tau^n \hspace{0.2cm} \forall x \in Y, \forall n \in \mathbb{N}.
\]

Bochi and Garibaldi \cite{BGar} proved that a typical cocycle  is dominated with respect to the Mather set $\mathcal{K}$.

\begin{thm}\label{mather:dom}
Let $\mathcal{A}:\Sig \to SL(2, \R)$ be a one-step cocycle. Assume that $\mathcal{A}:\Sig \to SL(2,\R)$ is a typical cocycle. Then, the cocycle $\A$ is dominated with respect to the Mather set $\mathcal{K}$.
\end{thm}
\begin{proof}
It follows from a combination of \cite[Remark 3.12]{BGar}, \cite[Theorem 5.2]{BGar}, and \cite[Theorem 6.5]{BGar}.
\end{proof}

A related result was proved by Morris \cite[Theorem 2.1]{Morris2010bis}. He proved a dominated
splitting under the weaker assumption of relative product boundedness, but
with the strong hypothesis that the set is minimal.

\begin{rem}Note that Theorem \ref{mather:dom} is proved for $\text{GL}(2, \mathbb{R})$-cocycles in \cite{BGar}, but in this case, an invariant set contained in the Mather set might contain the Mather set for the second Lyapunov exponent. The reason is that \cite{BGar} considers the trivial splitting $\mathbb{E}_{\mathcal{K}} \oplus 0$ as a dominated splitting with respect to the Mather set $\mathcal{K}$. However, we cannot have this case for $\text{SL}(2, \mathbb{R})$-cocycles as we have the dominated splitting $\mathbb{E}_{\mathcal{K}} \oplus \mathbb{F}_{\mathcal{K}}$.
\end{rem}

Let $(\A, T)$ be a one-step cocycle generated by $(A_1, \ldots, A_k)\in GL(2, \R)^k$.
Let $\R_{\ast}^2:=\R^2 \textbackslash \{0\}.$ The standard symmetric cone in $\mathbb{R}_{*}^{2}$ is
$$
C_{+}:=\left\{(x, y) \in \mathbb{R}_{*}^{2} ; x y \geqslant 0\right\}.
$$
An image of $C_{+}$ by a linear isomorphism is a \textit{cone} in $\mathbb{R}_{*}^{2}$ and a \textit{multicone} in $\mathbb{R}_{*}^{2}$ is a disjoint union of finitely many cones.

We say that a multicone $M \subset \mathbb{R}_{*}^{2}$ is \textit{strictly forward-invariant} with respect to $\left(A_{1}, \ldots, A_{k}\right)$ if the image multicone $\bigcup_{i} A_{i}(M)$ is contained in the interior of $M$ (see figure \ref{fig1}).

If $M$ is a multicone, its complementary multicone $M_{\mathrm{co}}$ is defined as the closure (relative to $\mathbb{R}_{*}^{2}$ ) of $\mathbb{R}_{*}^{2} \backslash M .$ If $M$ is strictly forward-invariant with respect to $\left(A_{1}, \ldots, A_{k}\right)$ then $M_{\mathrm{co}}$ is \textit{strictly backwards-invariant}, i.e., strictly forward-invariant with respect to $\left(A_{1}^{-1}, \ldots, A_{k}^{-1}\right)$.

It was proved in \cite{ABY, BG09} that the one-step cocycle $\A$  generated by $(A_1, \ldots, A_k)\in GL(2, \R)^k$ is dominated  if and only if $\A$ has a strictly forward-invariant multicone. Assume that the one-step cocycle $\A$ generated by $(A_1, \ldots, A_k)$ is dominated with respect to $\Sigma$. Let $e_1, e_2: \Sigma \to \mathbb{P}\R^2$ be the invariant directions forming the
dominated splitting, and let $M \subset \mathbb{R}_{*}^{2}$ be a strictly forward-invariant multicone,
and let $M_{co}$ be the (strictly backwards-invariant) complementary multicone. Then, for every $x=(x_{n})_{n\in \Z} \in \Sigma,$
\begin{equation}\label{def-unst}
\{e_{1}(x)\}=\bigcap_{n=1}^{\infty}A_{x_{-1}}^{'}\ldots A_{x_{-n}}^{'}(M^{'}),
\end{equation}
\begin{equation}\label{def-st}
\{e_{2}(x)\}=\bigcap_{n=1}^{\infty}(A_{x_{n-1}}^{'}\ldots A_{x_{0}}^{'})^{-1}(M_{co}^{'}),
\end{equation}
where $M^{'}:=\{v^{'} \in \mathbb{P}\R^2; v\in M\}$ and $\A^{'}:\mathbb{P}\R^2 \to \mathbb{P}\R^2$ defined by $\A^{'}(v^{'})=(\A(v))^{'}$. 
In particular, for $x=\left(x_{i}\right)_{i \in \mathbb{Z}}, e_{1}(x)$ depends only on $x_{-}:=\left(\ldots, x_{-2}, x_{-1}\right)$ while $e_{2}(x)$ depends only on $x_{+}:=\left(x_{0}, x_{1}, \ldots\right) .$ (That is, $e_{1}$, respectively $e_{2}$, is constant on local unstable, respectively stable, manifolds.)

 We say that the one-step cocycle $\A$ generated by $(A_{1},\ldots, A_k)$ satisfies the \textit{forward nonoverlapping condition ($\mathrm{NOC}$)} on $\Sigma$ if the cocycle has a strictly forward-invariant multicone $M \subset \mathbb{R}_{*}^{2}$ such that
$$
A_{i}(M) \cap A_{j}(M)=\varnothing \text { whenever } i \neq j.
$$


It is easy to see that $e_1$, respectively $e_2$, is one-to-one on $\Sigma$ if the forward, respectively backwards,  $\mathrm{NOC}$ holds on $\Sigma$ by $\eqref{def-unst}$,  respectively  $\eqref{def-st}$.  In fact, the latter statement is both necessary and sufficient; see \cite[Proposition 2.10]{BR}. For dominated $2 \times 2$ cocycles, Bochi and Rams \cite{BR} showed that Lyapunov maximizing measures have zero entropy under the $\mathrm{NOC}$ condition on $\Sigma.$

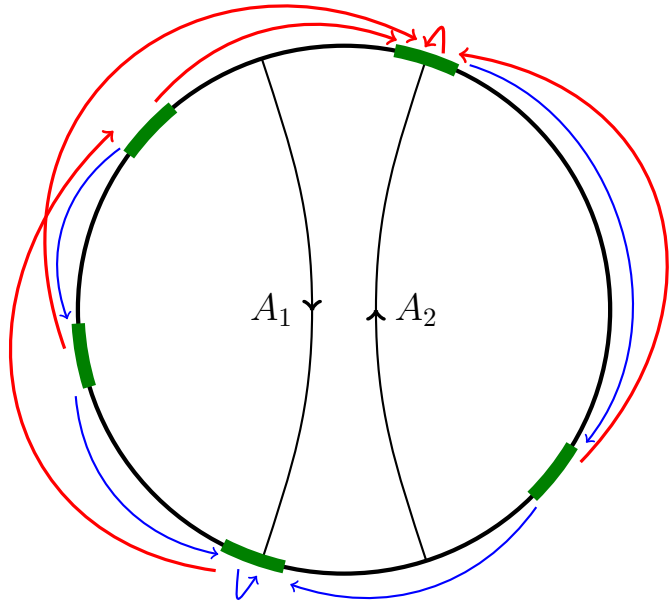
\begin{figure}[ht]
\begin{tikzpicture}[scale=0.7]

    \fill [white] (0,0) circle (7);
    
    \draw [thick] (-108:5) to [out=72,in=-90] (-0.6,0) to [out=90,in=108+180] (108:5); \draw [ultra thick, ->] (-0.6,-0.01) -- +(0,-0.01);
    \draw (-0.6,0) node [left=3pt] {\large $A_1$};
    
    \draw [thick] (-72:5) to [out=108,in=-90] (0.6,0) to [out=90,in=72+180] (72:5); \draw [ultra thick, ->] (0.6,-0.01) -- +(0,0.01);
    \draw (0.6,0) node [right=3pt] {\large $A_2$};
    
    \tikzstyle{solidarrow}=[blue, thick, ->]
    \draw [solidarrow] (63:5.2) to [bend left=56] (-29:5.2);
    \draw [solidarrow] (-46:5.2) to [bend left=37] (259:5.3);
    \draw [solidarrow] (198:5.3) to [bend right=37] (243:5.2);
    \draw [solidarrow] (144:5.2) to [bend right=37] (182:5.2);
    \draw [solidarrow] (248:5.3) .. controls (250:6) .. (252:5.3);
    
    \tikzstyle{solidarrow}=[very thick, red, ->]
    \draw [solidarrow] (69:5.2) .. controls (71:5.8) .. (73:5.2);
    \draw [solidarrow] (327:5.3) .. controls (0:7.2) and (33:7.7) .. (66:5.3);
    \draw [solidarrow] (244:5.5) .. controls (213:8) and(177:7.5) .. (142:5.5);
    \draw [solidarrow] (188:5.3) .. controls (150:8) and(110:7.6) .. (75:5.3);
    \draw [solidarrow] (132:5.3) to [bend left=35] (79:5.2);
    
    \draw [ultra thick] (0,0) circle (5);
    \foreach \a in {72,137,190,250,322} \draw [green!50!black, line width=5] (\a-7:5) arc (\a-7:\a+7:5);
    
\end{tikzpicture}
\caption{This is an example of a uniformly hyperbolic $A=(A_{1},A_{2})$ and a multicone with with 5 components that satisfies the $\mathrm{NOC}$ condition. Inner arrows indicate stable and
unstable directions of $A_{1}$ and $A_{2}$. Blue and red outer arrows indicate
the action of $A_{1}$ and $A_{2}$ in the components of the multicone, respectively.} \label{fig1}
\end{figure}

 \subsection{Precise setting and statements}
 Let $(\A, T)$ be a one-step cocycle generated by $(A_1, \ldots, A_k)\in SL(2, \R)^k$. Let the one-step cocycle $(\A,T)$ satisfying in pinching and twisting conditions. By Theorem \ref{mather:dom}, the cocycle $\A$ is dominated with respect to the Mather set $\mathcal{K}$, so there are continuous maps $e_1$ and $e_2$ from $\mathcal{K}$ to $\mathbb{P}\mathbb{R}^2$. Then, we show that $h_{\mu}(T)=0$ for all $\mu$ supported in $\mathcal{K}$ under the injectivity of $e_1$ and $e_2$ on $\mathcal{K}$.
A distinction in our result from Bochi and Rams' result \cite{BR} is that we consider typical cocycles, which are open and dense, as our assumptions, while they considered dominated cocycles, which hold only for a restrictive family of matrices, to obtain the zero entropy of Lyapunov maximizing measures. Also, we only assume the injectivity condition on the Mather set, not on the whole \(\Sigma\).

  The main result of this paper is the following:
\begin{thm}\label{main_th}
Assume that $(A_1, \ldots, A_k)\in SL(2, \R)^k$ generated a one-step cocycle $\A:\Sig \to SL(2, \R)$. Let the one-step cocycle $\A:\Sig \to SL(2, \R)$ satisfying pinching and twisting conditions. Suppose that the continuous maps $e_1$ and $e_2$ are one-to-one on $\mathcal{K}$. Then the entropy of any Lyapunov maximizing measure is zero.
\end{thm}

 Note that the injectivity condition is indeed required to show that \(h_{\mu}(T) = 0\) for all \(\mu\) supported in \(\mathcal{K}\). We give an example of a typical cocycle where the injectivity condition fails.
\begin{ex}\label{example}
Let $\A$ be a one-step cocycle generated by  matrices $A_1:=\left[\begin{array}{cc}
\lambda & 0 \\
0 & \frac{1}{\lambda}
\end{array}\right]$, where $\lambda >1$, $A_{2}:=\left[\begin{array}{cc}
\lambda & 0 \\
1 & \frac{1}{\lambda}
\end{array}\right]$ and an irrational rotation matrix $A_3:=R_{\theta}$. By definitions, the cocycle $\A$ generated by $(A_1, A_2, A_3)$ is a typical cocycle and is not a uniformly hyperbolic. Moreover, $\beta(\A)=\log \lambda.$

The cocycle $\A$ is dominated with respect to its Mather set (see Theorem \ref{mather:dom}). Thus, there
are continuous map $e_1$ and $e_2$ from $\mathcal{K}$ to $\mathbb{P}\R^2$. By our contraction, the set $\{1, 2\}^{\Z}$ is
contained in the Mather set $\mathcal{K}$.

It is easy to see that \((A_1, A_2)\) satisfies the forward NOC but not the backward NOC. Indeed, the actions of \(A_1^{-1}\) and \(A_2^{-1}\) on the projective space \(\mathbb{P}\mathbb{R}^2\) have a common fixed point, which is the line spanned by \(\left[\begin{array}{c} 0 \\ 1 \end{array}\right]\). Hence, the invariant multicone \(M\) for \(\{A_1^{-1}, A_2^{-1}\}\) must contain this line, which implies that \(A_1^{-1} M \cap A_2^{-1} M \neq \varnothing\). Thus, the injectivity condition fails (see \cite[Proposition 2.10]{BR}). Moreover, there are Lyapunov maximizing measures with positive entropy.

 \end{ex}
 
 In the following example, we provide an example of application of the main result that is
outside the scope of \cite{BR}.
\begin{ex}\label{example1}
Let $\A$ be a one-step cocycle generated by matrices $A_1:=\left[\begin{array}{cc}
\lambda & 0 \\
0 & \frac{1}{\lambda}
\end{array}\right]$, where $\lambda > 1$, and an irrational rotation matrix $A_2:=R_{\theta}$. By definition, the cocycle $\A$ generated by $(A_1, A_2)$ is a typical cocycle and is not uniformly hyperbolic. Moreover, $\beta(\A)=\log \lambda.$
It is easy to see that the point \(1^{\infty}\) is in the Mather set. We claim that the Mather set \(\mathcal{K}=\{1^{\infty}\}\). Indeed, the support of a measure is the intersection of the compact sets of full measure by definition. If \(\mu\) is any ergodic measure that is not supported on the point of all 1's, then \(\mu([2])>0\). In this case, by using the submultiplicativity of the norm,

$$
\lim_{n \rightarrow \infty} \frac{1}{n} \log \left\|A_{i_{n}} \ldots A_{i_1}\right\| \leq \log \left\|A_1\right\|^{\mu([1])} + \log \left\|A_2\right\|^{\mu([2])} \leq \mu([1]) \log \lambda.
$$

This means the only measure achieving the maximal  Lyapunov exponent $\log \lambda$ is the \((\delta_1)^{\mathbb{Z}}\)-measure. So, \(\mathcal{K}=\{1^{\infty}\}\). Therefore, the injectivity condition holds, and the entropy of the Lyapunov maximizing measure is zero.

\end{ex}

Note that one can also modify $A_1$ in Example \ref{example} such that $(A_1, A_2)$ satisfies the NOC condition in Example \ref{example}. Then, similar to the argument of Example \ref{example1}, one can show that the injectivity condition holds, so the entropy of any Lyapunov maximizing measure is zero.

This paper is organized as follows.  In Section 2 we prove the main theorem. In Section 3 we discuss the relation between the ergodic optimization of Lyapunov exponents and the ergodic optimization of Birkhoff averages. Let us mention that some of the key parts of the paper inspired by
ideas from the papers of Bousch and Mairesee \cite{BMa} and Bochi and Rams \cite{BR}.

\section{Proof of the main theorem}\label{section3}

\subsection{Barabanov functions}
Assume that $(A_1, \ldots, A_k) \in GL(d,\R)^k$ generated a one-step cocycle $\A: \Sig \to GL(d,\R).$  We recall that we denote by $\beta(\A)$ the maximal Lyapunov exponent. We say that the cocycle $\A$ is irreducible if there is no non-zero proper linear subspace $V$ such that $A_{i} V \subset V.$ Barabanov \cite{Bar} proved that there exists an \textit{extremal norm} $||| . |||$ on $\R^d$ with the following stronger property:

\[\forall u \in \mathbb{R}^{d}, \quad \max _{i \in\{1, \ldots, k\}}|||A_{i} u |||=e^{\beta(\A)}|||u|||, \]

whenever $A$ is irreducible. The norm can be seen as arising from a non-commutative
version of Mañé Lemma. For more information on the Barabanov norms, see \cite{J} and \cite{W}.

Bochi and Garibaldi \cite[Theorem 5.7]{BGar} established the existence of extremal norms in a far
more general setting. In particular, if the one-step cocycle  cocycle $(\A, T)$ satisfies  pinching and twisting  conditions, then 
\begin{equation}\label{Barb}
\forall u \in \mathbb{R}^{d}, \quad \max _{i \in\{1, \ldots, k\}}|||A_{i} u |||=e^{\beta(\A)}|||u|||.
\end{equation}

We fix  a typical one-step cocycle $\A: \Sigma \to \sl2$ with generator $(A_1, \ldots, A_k)$. We recall that the cocycle $\A$ is dominated with respect to the Mather set $\mathcal{K}$.

 We can define the set of optimal future trajectories by using \eqref{Barb} that is
\[J=\left\{(\omega, v)\in \mathcal{K} \times \mathbb{P}\R^2: \log |||\A^{n}(\omega) v |||=n\beta(\A)+\log|||v|||\right\}.\]
By \cite[Proposition 6.2 \& Theorem 6.5]{BGar}, the following holds:
\begin{equation}\label{matherset belongs to J}
\left(\omega, e_1(\omega)\right) \in J  \text{ for all } \omega \in \mathcal{K}. 
\end{equation}

\begin{lem}\label{geom}
If $(\omega, v)\in J$ and $u \in \mathbb{P}\R^2$ are such that $v-u \in e_{2}(\omega)$ then
\[|||v||| \leq |||u|||.\]

\end{lem}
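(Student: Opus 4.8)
The plan is to play the optimality of $v$ against the exponential contraction along the stable direction, exactly as in \cite[Lemma 4.1]{BR}, checking that each ingredient survives the passage to the subshift cocycle $(B,T')$. Set $s:=v-u\in e_{2}(\omega_{+})$, so $v=u+s$. Two consequences of the extremal norm \eqref{Barb} will be used. First, since $B(\omega)=A_{\omega_{0}}$ is one of the matrices $A_{i}$, relation \eqref{Barb} gives $|||B(\omega)\,w|||\le e^{\beta(B)}|||w|||$ for every $\omega$ and $w$; iterating along the letters of $\omega_{+}$ yields
\[|||B^{n}(\omega_{+})\,u|||\le e^{n\beta(B)}\,|||u|||\qquad(n\ge 1).\]
Second, $(\omega_{+},v)\in J$ is precisely the exact equality $|||B^{n}(\omega_{+})\,v|||=e^{n\beta(B)}\,|||v|||$ for all $n$.

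Next I would estimate the stable component. Note that $B^{n}(\omega_{+})$ depends only on $\omega_{0},\dots,\omega_{n-1}$, so it is well defined on $\Sigma_{Q}^{+}$, and by the Remark following Theorem \ref{dominated_equ} the direction $e_{2}(\omega_{+})$ is well defined there as well (it is $e_{1}$ that would require the past coordinates). Domination of $(B,T')$ forces uniform exponential contraction along $e_{2}$ relative to the top singular value: there are $C>0$ and $0<\tau<1$, independent of $\omega_{+}$, with
\[|||B^{n}(\omega_{+})\,s|||\le C\,\tau^{n}\,|||B^{n}(\omega_{+})|||_{\mathrm{op}}\,|||s|||\qquad(s\in e_{2}(\omega_{+})).\]
Since $|||B^{n}(\omega_{+})|||_{\mathrm{op}}\le e^{n\beta(B)}$ (again by iterating extremality), this gives $|||B^{n}(\omega_{+})\,s|||\le C\,\tau^{n}e^{n\beta(B)}|||s|||$.

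Finally I combine the pieces through the triangle inequality applied to $v=u+s$:
\[e^{n\beta(B)}|||v|||=|||B^{n}(\omega_{+})\,v|||\le|||B^{n}(\omega_{+})\,u|||+|||B^{n}(\omega_{+})\,s|||\le e^{n\beta(B)}|||u|||+C\,\tau^{n}e^{n\beta(B)}|||s|||.\]
Dividing by $e^{n\beta(B)}$ leaves $|||v|||\le|||u|||+C\,\tau^{n}|||s|||$, and letting $n\to\infty$ the error term vanishes, giving $|||v|||\le|||u|||$, as claimed.

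The step I expect to demand the most care is the uniform contraction estimate along $e_{2}$ over the one-sided shift $\Sigma_{Q}^{+}$: one must verify that the dominated splitting furnished by Theorem \ref{dominated_equ} descends from the two-sided shift $\Sigma_{Q}$ to a genuine, uniformly contracted stable direction $e_{2}(\omega_{+})$ on $\Sigma_{Q}^{+}$, with constants $C,\tau$ independent of $\omega_{+}$, and that the angle between $e_{2}(\omega_{+})$ and the most contracted singular direction of $B^{n}(\omega_{+})$ is itself exponentially small, so that the displayed bound holds for vectors exactly in $e_{2}(\omega_{+})$ rather than only in the singular direction. Once these uniform estimates are secured, together with the fact that \eqref{Barb} holds letter by letter because $B(\omega)$ is one of the generating matrices, the argument is formally identical to that of \cite{BR}.
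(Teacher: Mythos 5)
Your proof is correct, but it implements the decisive step differently from the paper, so it is worth comparing the two. Both arguments share the same skeleton: membership in $J$ gives the exact growth $|||B^{n}(\omega_{+})v|||=e^{n\beta(B)}|||v|||$, extremality \eqref{Barb} gives the one-sided bound $|||B^{n}(\omega_{+})u|||\leq e^{n\beta(B)}|||u|||$, and one must then show that the difference $s=v-u\in e_{2}(\omega_{+})$ is asymptotically negligible. The paper handles the negligibility \emph{multiplicatively}: it invokes \cite[Lemma 2.1]{BR} to get that $\|v_{n}-u_{n}\|/\|v_{n}\|$ and $\|v_{n}-u_{n}\|/\|u_{n}\|$ tend to $0$ (which uses that both $u,v\notin e_{2}(\omega_{+})$, i.e.\ both lie in the multicone), transfers this to the extremal norm via the equivalence \eqref{equivalen:norms}, and concludes $|\log|||u_{n}|||-\log|||v_{n}||||\to 0$, which combined with the two growth estimates yields the claim. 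You handle it \emph{additively}: the domination estimate $\|B^{n}(\omega_{+})s\|\leq C\tau^{n}\|B^{n}(\omega_{+})\|\,\|s\|$ for $s\in e_{2}(\omega_{+})$, together with the operator-norm bound $|||B^{n}(\omega_{+})|||_{\mathrm{op}}\leq e^{n\beta(B)}$ (again from iterated extremality), gives $|||B^{n}(\omega_{+})s|||=o(e^{n\beta(B)})$, and the triangle inequality plus division by $e^{n\beta(B)}$ finishes. Your route is more self-contained: it does not appeal to \cite[Lemma 2.1]{BR}, it never needs $u\notin e_{2}(\omega_{+})$, and it produces an explicit rate $|||v|||\leq|||u|||+C\tau^{n}|||s|||$. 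Two small points. First, your displayed contraction estimate is stated directly in the norm $|||\cdot|||$; strictly it is a Euclidean-norm statement, and passing to $|||\cdot|||$ tacitly uses the equivalence \eqref{equivalen:norms} (this only enlarges the constant $C$, but should be said, so your proof does rely on \eqref{equivalen:norms} after all, just as the paper's does). Second, the concern you flag about the angle between $e_{2}(\omega_{+})$ and the most contracted singular direction of $B^{n}(\omega_{+})$ is unnecessary: the dominance property as stated in Section 2, namely $\|B^{n}(\omega_{+})\mid e_{1}\|\big/\|B^{n}(\omega_{+})\mid e_{2}\|\geq ce^{\delta n}$, combined with the trivial bound $\|B^{n}(\omega_{+})\|\geq\|B^{n}(\omega_{+})\mid e_{1}\|$, immediately gives $\|B^{n}(\omega_{+})s\|\leq c^{-1}e^{-\delta n}\|B^{n}(\omega_{+})\|\,\|s\|$ for $s\in e_{2}(\omega_{+})$, with no singular-direction or angle estimate needed; and since $B^{n}(\omega_{+})$ and $e_{2}(\omega_{+})$ depend only on future coordinates, this bound descends verbatim from $\Sigma_{Q}$ to $\Sigma_{Q}^{+}$.
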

\begin{proof}
Suppose that $\omega \in \mathcal{K}$ and $v, u \in \mathbb{P}\R^2$ be such that $v-u \in e_{2}\left(\omega\right) .$ Assume that $v_{n}:=\A^{n}\left(\omega\right) v$
and $u_{n}:=\A^{n}\left(\omega\right) u$, for $n \geqslant 0$. Since $v, u \notin e_{2}\left(\omega\right)$, it follows from \eqref{domin} that the quantities
\begin{equation}\label{dom:prop}
\frac{\left\|v_{n}-u_{n}\right\|}{\left\|v_{n}\right\|}\text { and } \frac{\left\|v_{n}-u_{n}\right\|}{\left\|u_{n}\right\|} \text { tend to } 0 \text { as } n \rightarrow \infty .
\end{equation}
We are going to show that
\begin{equation}\label{minus}
\lim_{n\to \infty}|\log |||u_{n}|||-\log |||v_{n}||| |=0.
\end{equation}

Norms are equivalent; there is $C>0$ such that
\begin{equation}\label{equivalen:norms}
C^{-1} ||w|| \leq |||w||| \leq C ||w||.
\end{equation}

Now, \eqref{minus} can be estimated
$$\begin{aligned}
|\log |||u_{n}|||-\log |||v_{n}||| |&\leq \max\bigg(\frac{|||u_{n}|||}{|||v_{n}|||}-1, \frac{|||v_{n}|||}{|||u_{n}|||}-1 \bigg)\\
& \leq \frac{|||v_{n}-u_{n}|||}{\min(|||u_{n}|||, |||v_{n}|||)}\\
&\stackrel{\eqref{equivalen:norms}}{\leq}C^2\frac{||v_{n}-u_{n}||}{\min(||u_{n}||, ||v_{n}||)},
\end{aligned}
$$
which by \eqref{dom:prop} goes to zero as well. This proves \eqref{minus}.

Since $(\omega, v) \in J$, for all $n\geq 0$,
\[\log |||v_n|||=n\beta(\A)+\log |||v|||.\]
By \eqref{Barb},
\[\log |||u_n|||\leq n\beta(\A)+\log |||u|||.\]

In particular,
\[\log |||u_n||| -\log |||v_n||| \leq \log |||u|||-\log |||v|||.\]
Taking limits as $n \to \infty.$
\end{proof}

If $A,B,C,D$ are distinct points in the $\R_{*}^2$, then we define their
\textit{cross ratio} to be
\[ [A,B;C,D]:=\frac{A \times C}{A\times D}. \frac{B\times D}{B \times C},\]
where $\times$ denotes cross-product in $\R^2$, i.e., determinant. Furthermore, the cross-ratio is invariant under linear transformations.

Now, we use Lemma \ref{geom} to prove the following Lemma.
\begin{thm}\label{cross-ratio property}
If $(x,v_1), (y,w_1) \in J$ and non-zero vectors $v_{2} \in e_{2}(x), w_{2} \in e_{2}(y)$, then 
\[|[v_{1}, w_{1}; v_{2},w_{2}]|\geq 1.\]

\end{thm}
\begin{proof}
Since $e_{1}$ direction is different from any $e_{2}$ direction (see \cite[Proposition 2.7]{BR}),  $v_1$ or $w_1$ can not be collinear to $v_2$ or $w_2$, so the  cross-ratio
is well defined. Furthermore, one can write
\[v_1=c_1 v_2 +c_2 w_1 \quad \text{and} \quad w_1=\beta_1 w_2+\beta_2 v_1 .\]
By Lemma \ref{geom},
\[ |||v_1 ||| \leq |||c_{2}w_{1}|||\leq |||c_{2} \beta_{2} v_{1}|||=|c_{2}\beta_{2}| |||v_{1}|||.\]

Hence, $|c_2 \beta_2| \geq 1.$ We substitute
\[c_2=\frac{v_1 \times v_2}{w_1 \times v_2} \quad \text{and} \quad \beta_{2}=\frac{w_1 \times w_2}{v_1 \times w_2}.\]
The assertion is obtained.
\end{proof}

By Theorem \ref{cross-ratio property} and \eqref{matherset belongs to J}, we obtain the following corollary.
\begin{cor}\label{cross-ratio-1}
Assume that $x,y \in \mathcal{K}$. Then,
\[ |[e_{1}(x), e_{1}(y); e_{2}(x), e_{2}(y)]|\geq 1. \]
\end{cor}

Let $\left(v_{1}, w_{1} ; v_{2}, w_{2}\right)$ be a 4-tuple of distinct points in $\mathbb{P}\R^2$. Then, we only have the following cases:
\begin{itemize}
\item antiparallel configuration: \[v_{1}<w_{2}<w_{1}<v_{2}<v_{1}\] for some cyclic order $<$ on $\mathbb{P}\R^2$;
\item  coparallel configuration: \[v_{1}<w_{1}<w_{2}<v_{2}<v_{1}\] for some cyclic order $<$ on $\mathbb{P}\R^2$;
\item  crossing configuration: $v_{1}<w_{1}<v_{2}<w_{2}<v_{1}$ for some cyclic order $<$ on $\mathbb{P}\R^2$;

\end{itemize}
see \cite[Figures 2,3 and 4]{BR}.

Let $v_{1}, v_{2}$ be points in the unit circle $\partial \mathbb{D}$, where $\mathbb{D}$ is a disk, and let $\overrightarrow{v_{2} v_{1}}$ be the oriented hyperbolic geodesic from $v_{2}$ to $v_{1}$. We identify $\partial \mathbb{D}$ with the projective space $\mathbb{P}\R^2$ as follows:
$e^{2 \theta i} \in \partial \mathbb{D} \leftrightarrow(\cos \theta, \sin \theta)^{\prime} \in \mathbb{P}\R^2$.

We say that two geodesics $\overrightarrow{v_2 v_1}$ and $\overrightarrow{w_{2} w_{1}}$ with distinct endpoints are antiparallel, coparallel or crossing according to the configuration  $\left(v_{1}, w_{1} ; v_{2}, w_{2}\right)$.

We define the following compact subset $\mathbb{P}\R^2 \times \mathbb{P}\R^2:$
\[\Gamma:=\{(e_1(x), e_2(x)): x\in \mathcal{K}\}.\]

The set $\Gamma$ can be decomposed it into fibers in two different
ways:
\[ \Gamma=\bigcup_{\omega_1 \in e_{1}(\mathcal{K})} \{ \omega_1\} \times \Gamma_{2}(\omega_1)=\bigcup_{\omega_2 \in e_{2}(\mathcal{K})} \Gamma_1 (\omega_2) \times \{\omega_2\}.\]

\begin{thm}\label{Not coparallel}
Suppose that $(v_1, v_2), (w_1, w_2)\in \Gamma$. Then, $(v_1, w_1; v_2, w_2)$ cannot be in coparallel configuration.
\end{thm}
\begin{proof}
Suppose that $v_{1}, v_2, w_1, w_2$ are distinct; otherwise, there is nothing to prove. By Corollary \ref{cross-ratio-1}, $|[v_1, w_1; v_2, w_2]|\geq 1$, so $[v_1, w_1; v_2, w_2]\leq -1$ or $[v_1, w_1; v_2, w_2]\geq 1$. On the other hand, the configuration $(v_1, w_1; v_2, w_2)$ is coparallel if and only if $0<[v_1, w_1; v_2, w_2]<1$, by \cite[Proposition 4.4]{BR}. Thus, the configuration can not be coparallel.
\end{proof}

Now, we are going to show that each direction $e_1(\omega)$ or $e_2(\omega)$ uniquely establishes the other, except for a countable number of bad directions for each $\omega \in \mathcal{K}.$ We define the following sets:

\[\begin{aligned}
&N_{1}:=\left\{v_{1} \in e_{1}(\mathcal{K}) ; \Gamma_{2}\left(v_{1}\right) \text { has more than one element}\right\}, \\
&N_{2}:=\left\{v_{2} \in e_{2}(\mathcal{K}) ; \Gamma_{1}\left(v_{2}\right) \text { has more than one element}\right\}.
\end{aligned}\]

We apply the same arguments used in \cite{BR}.
\begin{prop}\label{countable}
The $N_i$ is countable for $i\in \{1, 2\}.$
\end{prop}

\begin{proof}
We will only consider $i=1$ as the proof of the other case is similar.

We denote by $I_{1}(v_1)$ the least closed subinterval of $\mathbb{P}\R^2 \setminus \{v_1\}$ containing $\Gamma_2(v_1)$ for each $v_1 \in N_1.$ We are going to show that  $I_{1}(v_1)$, $I_{1}(v_2)$ have disjoint interiors in $\mathbb{P}\R^2$ if $v_1, v_2 \in N_{1}$ are distinct.

We denote by $v$ and $w$ the endpoints of the interval $I_1(v_1)$ and take any point $v_3$ in its interior. Thus, $\overrightarrow{v_3 v_2}$ is coparallel to one of the two geodesics $\overrightarrow{vv_1}$ or $\overrightarrow{wv_1}.$ By Theorem \ref{Not coparallel}, one concludes that $(v_2, v_3)\notin \Gamma$ as $(v_1, v)$ and $(v_1, w)$ in $\Gamma.$ Thus, $\Gamma_2(v_2) \cap \mathring{I_1} (v_1)=\emptyset,$ and, in particular, $\partial I_{1}(v_2) \cap \mathring{I_1} (v_1)=\emptyset.$ Similarly, $\partial I_{1}(v_1) \cap \mathring{I_1} (v_2)=\emptyset.$ Thus,  $\mathring{I_1} (v_1)\cap \mathring{I_1} (v_2)=\emptyset.$ 

 It follows from separability of the circle that $N_1$ is countable.
\end{proof}

Therefore, the following implication holds:
$$
\left.\begin{array}{l}
x, y \in \mathcal{K} \\
e_{i}(x)=e_{i}(y) \notin N_{i} \text { for some } i
\end{array}\right\} \Rightarrow\left\{\begin{array}{l}
e_{1}(x)=e_{1}(y) \\
e_{2}(x)=e_{2}(y)
\end{array}\right.
$$

Assume that $\mu$ is a Lyapunov maximizing measure that is not atomic; otherwise, there is nothing to prove. Now, we are going to use the fact that $e_{1}$ is one-to-one on $\mathcal{K}.$ Let $\pi_{+}: \omega \mapsto \pi_{+}(\omega)=(\omega_{0}, \omega_{1}, \ldots).$

\begin{thm}
$\mu(e_{1}^{-1}(N_1))=0$
\end{thm}
\begin{proof}
 The set $N_{1} \subset \mathbb{P}\mathbb{R}^2$ is countable by Proposition \ref{countable}. The set $e_{1}^{-1}\left(N_{1}\right) \subset \mathcal{K}$ is a countable union of sets of the form $\left\{\omega_{-}\right\} \times \pi_{+}(\mathcal{K})$ as $e_1$ is one-to-one on $\mathcal{K}$. By contradiction, suppose  that $e_{1}^{-1}\left(N_{1}\right)$ has positive measure. Then there exists $\omega_{-}$ such that $U:=\left\{\omega_{-}\right\} \times \pi_{+}(\mathcal{K})$ has positive measure. It follows from Poincaré recurrence that there exists $p \geqslant 1$ such that $T^{-p}(U) \cap U \neq \varnothing$. Therefore, the infinite word $\omega_{-}$ is periodic with period $p$. This implies that $T^{-p}(U) \subset U$. By invariance, $\mu\left(U \backslash T^{-p}(U)\right)=0$ and
$$
\begin{aligned}
\mu\left(\bigcap_{n \geqslant 0} T^{-n p}(U)\right) &=\mu(U)-\mu\left(U \backslash T^{-p}(U)\right)-\mu\left(T^{-p}(U) \backslash T^{-2 p}(U)\right)-\cdots \\
&=\mu(U)>0 .
\end{aligned}
$$
On the other hand, the set $\bigcap_{n \geqslant 0} T^{-n p}(U)$ is a singleton, but we assumed that $\mu$ is non-atomic. 
\end{proof}

Therefore each of the directions $e_1$ and $e_2$ uniquely determines the other $\mu$-a.e. by the definitions of $N_1$ and $\Gamma$. To be more precise, by \cite[Lemma 5.1]{BR}, there is a Borel measurable function $g$ such that
\[ g \circ e_1=e_2 \text{  for } \mu\text{-almost all},\]
where $\mu$ is a Lyapunov maximizing measure. Then, for $\mu$-almost every $\omega \in \mathcal{K},$ the past $\omega_{-}$ uniquely determines the future $\omega_{+};$
\[ \omega_{-} \xrightarrow{\hspace{0.2cm}   e_1 \hspace{0.2cm}    } e_{1}(\omega_{-}) \xrightarrow{\hspace{0.25cm} g\hspace{0.25cm} } e_{2}(\omega_{+}) \xrightarrow[\text{one-to-one}]{e_{2}^{-1}} \omega_{+}.\]

This implies $h_{\mu}(T)=0$, for all $\mu$-supported in $\mathcal{K}.$

We end this section by commenting on \cite{BGar}, from which we use some results.

\begin{rem}
Bochi and Garibaldi \cite{BGar} introduced the {\it uniform spannability} property and proved several results that we used some of them in this article under this property. They also showed that pinching and twisting conditions imply uniform spannability (see \cite[Remark 3.12]{BGar}). Recently, Mohammadpour and Park \cite[Theorem 1.1]{MP-uniform-qm} showed that a similar version of the uniform spannability property holds for one-step cocycles under the irreducibility assumption (see \cite[Remark 2.6]{MP-uniform-qm}).
\end{rem}

\section{Ergodic optimization of Lyapunov exponents of typical cocycles and Birkhoff averages are equivalent}
Let $\A: \Sigma \to \sl2$ be a typical cocycle. We are going to show that there is a continuous function $f \in C(\Sigma)$ such that $\mathcal{M}_{max}(\A)=\mathcal{M}_{max}(f)$.  Moreover, our result provides a relation between the Birkhoff sum $S_{n}f$ and the norm of the matrix product $\A^{n}.$

We say that $\Phi:=\{\log \phi_{n}\}_{n=1}^{\infty}$ is an almost additive potential over a TDS $(X, T)$  if there exists a constant $C > 0$ such that for any $m,n \in \N$, $x\in X$, we have
\[ C^{-1}\phi_{n}(x)\phi_{m}(T^{n})(x) \leq \phi_{n+m}(x)\leq C \phi_{n}(x) \phi_{m}(T^{n}(x)).\]

We say that $\A:X\rightarrow GL(d, \R)$ is \textit{almost multiplicative} over a TDS $(X, T)$ if there is a constant $C>0$ such that
\[||\A^{m+n}(x)|| \geq C ||\A^m(x)|| ||\A^n(T^m(x))|| \hspace{0.2cm}\forall x\in X, m,n\in \N.\]

We note that since clearly $||\A^{m+n}(x)|| \leq  ||\A^m(x)|| ||\A^n(T^m(x))|| \hspace{0.1cm}$ for all $x\in X, m,n\in \N$, the condition of almost multiplicativity of $A$ is equivalent to the statement that $\Phi_{\A}=\{\log \|\A^{n}\|\}_{n=1}^{\infty}$ is almost additive.

The author \cite{Moh20} showed that if a cocycle is dominated with index 1, which can be  characterized in terms of existence of invariant cone fields (or multicones)\cite{ABY, BG09}, then the potential $\Phi_{\A}$ is almost additive.

\begin{thm}\label{almost_additive-dom}
Let $X$ be a compact metric space, and let $T:X \to X$ be a homeomorphism. Assume that the cocycle $\A: X\rightarrow GL(2, \R)$ is uniformly hyperbolic  over  $(X, T)$.  Then, there exists $\kappa>0$ such that for every $m,n>0$ and for every $x\in X$ we have

\[
||\A^{m+n}(x)|| \geq \kappa ||\A^m(x)|| \cdot ||\A^n(T^m(x))||.
\]
\end{thm}
\begin{proof}
It follows from \cite[Proposition 5.8]{Moh20}.
\end{proof}

Cuneo \cite{Cu} showed that  every almost additive potential sequence is actually equivalent to an additive potential in the sense that there exists a continuous potential with the same equilibrium states, topological pressure,  weak Gibbs measures, variational principle, level sets (and irregular set) for the Lyapunov exponent.

\begin{thm}[{{\cite[Theorem 1.2]{Cu}}}]\label{approx}
Let $\Phi=\{\log \phi_{n}\}_{n=1}^{\infty}$ be an almost additive sequence of potentials over the topological dynamical systems $(X,T)$. Then, there exists $f \in C(X)$ such that
\[
\lim _{n \rightarrow \infty} \frac{1}{n}\left\|\log \phi_{n}-S_{n} f\right\|=0
\]
\end{thm}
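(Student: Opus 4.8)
The plan is to prove the statement in two stages: first that an almost additive sequence can be approximated \emph{asymptotically} by the Birkhoff sums of a single explicit function, with an error that is sublinear in $n$ but still depends on a scale parameter $N$; and then to remove the dependence on $N$ by a telescoping construction producing one continuous $f$. Throughout write $a_n:=\log\phi_n$ and let $C>0$ be the almost additivity constant, so that $|a_{m+n}(x)-a_m(x)-a_n(T^m x)|\le C$ for all $x\in X$ and $m,n\ge 1$. The guiding principle is that coboundaries are invisible at the level of normalized Birkhoff sums: if $g$ and $\tilde g$ differ by $u-u\circ T$ for some $u\in C(X)$, then $S_n g-S_n\tilde g=u-u\circ T^n$ is uniformly bounded, hence contributes nothing to $\lim_n\frac1n\|\cdot\|$. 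This freedom is what repairs the non-convergence that obstructs the naive approach.

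For the first stage I would take $g_N:=\frac1N a_N$ and estimate $\frac1n\|a_n-S_n g_N\|$ directly from almost additivity, noting $S_n g_N=\frac1N\sum_{j=0}^{n-1}a_N(T^j\cdot)$. Using $a_N(T^j x)=a_{j+N}(x)-a_j(x)+O(C)$, telescoping $\sum_{j=0}^{n-1}(a_{j+N}-a_j)$ into the boundary blocks $\sum_{i=n}^{n+N-1}a_i-\sum_{i=0}^{N-1}a_i$, and estimating those blocks by one further application of almost additivity together with the linear growth bound $\|a_m\|\le m(\|a_1\|+C)$, one gets
\[
\Big\|\,\tfrac1N\sum_{j=0}^{n-1}a_N(T^j\cdot)-a_n\Big\|\le \mathrm{const}\cdot\Big(N(\|a_1\|+C)+\tfrac{nC}{N}\Big).
\]
Dividing by $n$ and letting $n\to\infty$ yields $\varepsilon_N:=\limsup_{n\to\infty}\frac1n\|a_n-S_n g_N\|\le \mathrm{const}\cdot C/N\to 0$, i.e. the asymptotic additivity of $\Phi$.

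For the second stage the obstacle is that the $g_N$ need not converge in $C(X)$, so one cannot set $f=\lim_N g_N$. The repair uses the following lemma: if $h\in C(X)$ satisfies $\limsup_n\frac1n\|S_n h\|\le\delta$, then for $m$ large the function $\tilde h:=\frac1m S_m h$ has $\|\tilde h\|\le\delta+\eta$, while the identity $h-\frac1m S_m h=V-V\circ T$ with $V:=\frac1m\sum_{j=0}^{m-1}S_j h$ shows that $h$ and $\tilde h$ differ by a coboundary. I would then pass to a subsequence $N_k$ with $\varepsilon_{N_k}\le 2^{-k}$, set $h_k:=g_{N_{k+1}}-g_{N_k}$ (so that $\limsup_n\frac1n\|S_n h_k\|\le\varepsilon_{N_k}+\varepsilon_{N_{k+1}}$), replace each $h_k$ by a cohomologous uniformly small $\tilde h_k=h_k-(V_k-V_k\circ T)$ with $\|\tilde h_k\|\le 2^{-k+1}$, and define
\[
f:=g_{N_1}+\sum_{k\ge 1}\tilde h_k .
\]
The bound $\sum_k\|\tilde h_k\|<\infty$ guarantees uniform convergence, so $f\in C(X)$.

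Finally I would verify the approximation. Telescoping $\sum_{k<K}h_k=g_{N_K}-g_{N_1}$ gives, for each fixed $K$,
\[
S_n f=S_n g_{N_K}-\sum_{k<K}\big(V_k-V_k\circ T^n\big)+\sum_{k\ge K}S_n\tilde h_k .
\]
Dividing by $n$ and taking $\limsup_{n\to\infty}$: the first term contributes $\le\varepsilon_{N_K}$, the middle (finitely many fixed coboundaries) contributes $0$, and the tail is controlled by $\sum_{k\ge K}\|\tilde h_k\|$. Hence $\limsup_n\frac1n\|a_n-S_n f\|\le\varepsilon_{N_K}+\sum_{k\ge K}\|\tilde h_k\|$ for \emph{every} $K$, and letting $K\to\infty$ forces the left-hand side to be $0$, which is the claim. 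The genuinely delicate point, and the step I expect to be the main obstacle, is exactly this second stage: one must absorb all the non-convergence of $(g_N)$ into coboundaries while keeping the series uniformly summable, since the correcting functions $V_k$ may have norms growing with the averaging length and only become harmless after division by $n$ with $K$ held fixed.
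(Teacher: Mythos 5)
Your proof is correct, and it is essentially the argument of the cited source: the paper itself gives no proof of this statement (it is quoted verbatim from Cuneo \cite[Theorem 1.2]{Cu}), and Cuneo's proof proceeds exactly as you do — first showing almost additivity implies asymptotic additivity via the approximants $\frac{1}{N}\log\phi_N$, then assembling a single $f$ by correcting the telescoped differences with coboundaries $V_k-V_k\circ T$ so that the series converges uniformly. Both stages of your argument, including the key identity $h-\frac{1}{m}S_m h=V-V\circ T$ with $V=\frac{1}{m}\sum_{j=0}^{m-1}S_j h$ and the final $\limsup$ estimate uniform in $K$, are sound.
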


\begin{thm}\label{eq1}
Assume that $(A_1, \ldots, A_k) \in SL(2,\R)^k$ generated a one-step cocycle $\A: \Sig \to SL(2,\R).$ Let $\mathcal{A}:\Sig \to SL(2,\R)$ be a typical cocycle.  Then, there is a continuous function $f \in C(\mathcal{K})$ such that 
\[ \mathcal{M}_{max}(\A)=\mathcal{M}_{max}(f).\]
\end{thm}
\begin{proof}
First, we define the induced first-return cocycle on the Mather set, and then we use above theorems.

Since the cocycle $\A$ is dominated with respect to the Mather set $\mathcal{K}$ by Theorem \ref{mather:dom}, one can define the following dominated cocycle: Let $T_{\mathcal{K}}: \mathcal{K} \rightarrow \mathcal{K}$ be the first return map defined by
\[
T_{\mathcal{K}}(x):=T^{N_{\mathcal{K}}(x)}(x) \quad \text { where } N_{\mathcal{K}}(x):=\inf \left\{n \geqslant 1: T^{n}(x) \in \mathcal{K}\right\} .
\]

Let  
$\B:=  \mathcal{K} \rightarrow SL(2, \mathbb{R})$ be the function defined by $\B(x)=\A^{N_{\mathcal{K}}(x)}(x)$\footnotemark \footnotetext{Note that $\chi(x, \A)=\chi(x, \B)$ for any $\mu$-ergodic Lyapunov maximizing measure.}. By Theorem \ref{almost_additive-dom}, $\Phi_{\B}=\{\log \|\B^{n}\|\}_{n=1}^{\infty}$ is almost additive. Therefore, there is a continuous function $f$ such that
\begin{equation}\label{same}
\lim _{n \rightarrow \infty} \frac{1}{n}\left\|\log \|\B^{n}\|-S_{n} f\right\|=0,
\end{equation}
by Theorem \ref{approx}. Thus, \[\chi(\mu, \B)=\int f d\mu,\]
for any maximizing measure $\mu.$ 
\end{proof}

 \subsection*{Acknowledgements}
The author is grateful to Philippe Thieullen for his careful reading of the paper and useful comments. He would also like to thank Rafael Potrie and Michal Rams for useful discussions and Adam Abrams for helping him with the figure. Finally, he would like to express his gratitude to the anonymous referee for their valuable corrections and suggestions, which greatly contributed to the improvement of the paper.

This work was supported by the Agence Nationale de la Recherche through the project Codys (ANR-18-CE40-0007) and the Knut and
Alice Wallenberg Foundation.
\bibliographystyle{acm}
\bibliography{zero-entropy}

\begin{thebibliography}{10}

\bibitem{Arn}
{\sc Arnold, L.}
\newblock {\normalfont Random dynamical systems}.
\newblock {\em Springer Monographs in Mathematics\/} (1998).

\bibitem{ABY}
{\sc Avila, A., Bochi, J., and Yoccoz, J.}
\newblock Uniformly hyperbolic finite-valued {SL(2, R)} cocycles.
\newblock {\em Commentarii Mathematici Helvetici 85}, 4 (2010), 813--884.

\bibitem{AV07}
{\sc Avila, A., and Viana, M.}
\newblock Simplicity of {L}yapunov spectra: a sufficient criterion.
\newblock {\em Portugaliae Mathematica 67}, 3 (2007), 311--376.

\bibitem{AV07-acta}
{\sc Avila, A., and Viana, M.}
\newblock Simplicity of {L}yapunov spectra: proof of the
  {Z}orich–{K}ontsevich conjecture.
\newblock {\em Acta Mathematica 189\/} (2007), 1--56.

\bibitem{Bar}
{\sc Barabanov, N.~E.}
\newblock On the {L}yapunov exponent of discrete inclusions. i–iii.
\newblock {\em Automation and Remote Control 49}, 2 (1988), 152--157.

\bibitem{BTV}
{\sc Blondel, D., Theys, J., and Vladimirov, A.~A.}
\newblock An elementary counterexample to the finiteness conjecture.
\newblock {\em SIAM Journal on Matrix Analysis and Applications 24}, 4 (2003),
  963--970.

\bibitem{Bo}
{\sc Bochi, J.}
\newblock Ergodic optimization of {B}irkhoff averages and {L}yapunov exponents.
\newblock {\em Proceedings of the ICM, 2018 2\/} (2018), 1821--1842.

\bibitem{BGar}
{\sc Bochi, J., and Garibaldi, E.}
\newblock Extremal norms for fiber-bunched cocycles.
\newblock {\em Journal de l’{\'E}cole polytechnique—Math{\'e}matiques 6\/}
  (2019), 947--1004.

\bibitem{BG09}
{\sc Bochi, J., and Gourmelon, N.}
\newblock Some characterizations of domination.
\newblock {\em Mathematische Zeitschrift 263}, 1 (2009), 221--231.

\bibitem{BR}
{\sc Bochi, J., and Rams, M.}
\newblock The entropy of {L}yapunov-optimizing measures of some matrix
  cocycles.
\newblock {\em Journal of Modern Dynamics 10}, 2 (2016), 255--286.

\bibitem{BV}
{\sc Bonatti, C., and Viana, M.}
\newblock Lyapunov exponents with multiplicity 1 for deterministic products of
  matrices.
\newblock {\em Ergodic Theory and Dynamical Systems 24}, 5 (2004), 1295--1330.

\bibitem{Bousch1}
{\sc Bousch, T.}
\newblock La condition de {W}alters.
\newblock {\em Annales Scientifiques de l'\'Ecole Normale Sup\'erieure.
  Quatri\`eme S\'erie 34}, 2 (2001), 287--311.

\bibitem{Bousch}
{\sc Bousch, T.}
\newblock Le lemme de {M}a\~n\'e-{C}onze-{G}uivarc'h pour les syst\`emes
  amphidynamiques rectifiables.
\newblock {\em Annales de la Facult\'e{} des Sciences de Toulouse.
  Math\'ematiques. S\'erie 6 20}, 1 (2011), 1--14.

\bibitem{BMa}
{\sc Bousch, T., and Mairesse, J.}
\newblock Asymptotic height optimization for topical ifs, tetris heaps, and the
  finiteness conjecture.
\newblock {\em Journal of the American Mathematical Society 15}, 1 (2002),
  77--111.

\bibitem{Bremont}
{\sc Br{\'e}mont, J.}
\newblock Gibbs measures at temperature zero.
\newblock {\em Nonlinearity 16}, 2 (2003), 419--426.

\bibitem{CH-zerotem10}
{\sc Chazottes, J., and Hochman, M.}
\newblock On the zero-temperature limit of {G}ibbs states.
\newblock {\em Communications in Mathematical Physics 297}, 1 (2010), 265--281.

\bibitem{C}
{\sc Contreras, G.}
\newblock Ground states are generically a periodic orbit.
\newblock {\em Inventiones Mathematicae 205}, 2 (2016), 383–412.

\bibitem{Cu}
{\sc Cuneo, N.}
\newblock Additive, almost additive and asymptotically additive potential
  sequences are equivalent.
\newblock {\em Communications in Mathematical Physics 377\/} (2020),
  2579--2595.

\bibitem{CLT}
{\sc G.~Contreras, A. O.~Lopes, P.~T.}
\newblock Lyapunov minimizing measures for expanding maps of the circle.
\newblock {\em Ergodic Theory and Dynamical Systems 21}, 5 (2001), 1379–1409.

\bibitem{HMST}
{\sc Hare, K., Morris, I., Sidorov, N., and Theys, J.}
\newblock An explicit counterexample to the {L}agarias-{W}ang finiteness
  conjecture.
\newblock {\em Communications in Mathematical Physics 266}, 6 (2011),
  4667--4701.

\bibitem{Je}
{\sc Jenkinson, O.}
\newblock Ergodic optimization in dynamical systems.
\newblock {\em Ergodic Theory and Dynamical Systems 39}, 10 (2019), 2593--2618.

\bibitem{Jenkinson-MU}
{\sc Jenkinson, O., Mauldin, R., and Urba{\'n}ski, M.}
\newblock Zero temperature limits of {G}ibbs-equilibrium states for countable
  alphabet subshifts of finite type.
\newblock {\em Nonlinearity 119}, 2 (2005), 765--776.

\bibitem{JP}
{\sc Jenkinson, O., and Pollicott, M.}
\newblock Joint spectral radius, {S}turmian measures and the finiteness
  conjecture.
\newblock {\em Ergodic Theory and Dynamical Systems 38}, 8 (2018), 3062--3100.

\bibitem{J}
{\sc Jungers, R.}
\newblock {\normalfont The joint spectral radius. Theory and applications}.
\newblock {\em Lecture Notes in Control and Information Sciences\/} (2009).

\bibitem{Moh20a}
{\sc Mohammadpour, R.}
\newblock Zero temperature limits of equilibrium states for subadditive
  potentials and approximation of the maximal {L}yapunov exponent.
\newblock {\em Topological Methods in Nonlinear Analysis 55}, 2 (2020),
  697--710.

\bibitem{Moh20}
{\sc Mohammadpour, R.}
\newblock Lyapunov spectrum properties and continuity of the lower joint
  spectral radius.
\newblock {\em Journal of Statistical Physics 187}, 3 (2022), 23.

\bibitem{MP-uniform-qm}
{\sc Mohammadpour, R., and Park, K.}
\newblock Uniform quasi-multiplicativity of locally constant cocycles and
  applications.
\newblock {\em Studia Mathematica 275}, 1 (2024), 85--98.

\bibitem{Morris-zero}
{\sc Morris, I.}
\newblock Entropy for zero-temperature limits of {G}ibbs-equilibrium states for
  countable-alphabet subshifts of finite type.
\newblock {\em Journal of Statistical Physics 126}, 2 (2007), 315--324.

\bibitem{Morris2010bis}
{\sc Morris, I.}
\newblock A rapidly-converging lower bound for the joint spectral radius via
  multiplicative ergodic theory.
\newblock {\em Advances in Mathematics 225}, 6 (2010), 3425--3445.

\bibitem{Mor13}
{\sc Morris, I.}
\newblock Mather sets for sequences of matrices and applications to the study
  of joint spectral radii.
\newblock {\em Proceedings of the London Mathematical Society 107}, 1 (2013),
  121--150.

\bibitem{O}
{\sc Oregón-Reyes, E.}
\newblock Properties of sets of isometries of gromov hyperbolic spaces.
\newblock {\em Groups, Geometry, and Dynamics 12}, 3 (2018), 889–--910.

\bibitem{Savchenko}
{\sc Savchenko, S.~V.}
\newblock Homological inequalities for finite topological {M}arkov chains.
\newblock {\em Functional Analysis and its Applications 33}, 3 (1999),
  236–--238.

\bibitem{W}
{\sc Wirth, F.}
\newblock The generalized spectral radius and extremal norms.
\newblock {\em Linear Algebra and Its Applications 342}, 1–3 (2002), 17--40.

\end{thebibliography}
\end{document}